\newtheorem{theorem}{Theorem}
\newtheorem{lemma}[theorem]{Lemma}
\newtheorem{claim}[theorem]{Claim}
\theoremstyle{definition}
\titleformat{\section}[hang]{\scshape\large\bfseries\filcenter}{\S\thesection}{4pt}{}
\titleformat{\subsection}[hang]{\scshape\bfseries}{\thesubsection.}{4pt}{}
\newcommand{\on}[1]{
	\operatorname{#1}
}
\newcommand{\tdt}{\times\cdots\times}
\newcommand{\tightoverset}[2]{
  \mathop{#2}\limits^{\vbox to -.5ex{\kern-1.15ex\hbox{$#1$}\vss}}}
\newcommand{\bigconv}[1]{
	\mathbf{C}_{#1}
}
\newcommand\restr[2]{{
  \left.\kern-\nulldelimiterspace 
  #1 
  \vphantom{\big|} 
  \right|_{#2} 
}}
\newcommand{\subalign}[1]{%
  \vcenter{%
    \Let@ \restore@math@cr \default@tag
    \baselineskip\fontdimen10 \scriptfont\tw@
    \advance\baselineskip\fontdimen12 \scriptfont\tw@
    \lineskip\thr@@\fontdimen8 \scriptfont\thr@@
    \lineskiplimit\lineskip
    \ialign{\hfil$\m@th\scriptstyle##$&$\m@th\scriptstyle{}##$\hfil\crcr
      #1\crcr
    }%
  }%
}
\newcommand\blfootnote[1]{%
  \begingroup
  \renewcommand\thefootnote{}\footnote{#1}%
  \addtocounter{footnote}{-1}%
  \endgroup
}
\newcommand\ssk[1]{
	\substack{#1}
}
\newcommand{\exx}{
  \mathop{
    \mathchoice{\vcenter{\hbox{\larger[4]$\mathbb{E}$}}}
               {\kern0pt\mathbb{E}}
               {\kern0pt\mathbb{E}}
               {\kern0pt\mathbb{E}}
  }\displaylimits
}
\newcommand*\bcdot{\mathpalette\bigcdot@{0.5}}
\newcommand*\bigcdot@[2]{\mathbin{\vcenter{\hbox{\scalebox{#2}{$\m@th#1\bullet$}}}}}
\def\blfootnote{\gdef\@thefnmark{}\@footnotetext}
\newcommand\id{\mathbbm{1}}
\begin{document}
\begin{center}\Large\noindent{\bfseries{\scshape Low-codimensional Subvarieties Inside Dense Multilinear Varieties}}\\[24pt]\normalsize\noindent{\scshape Luka Mili\'cevi\'c\dag}\\[6pt]
\end{center}
\blfootnote{\noindent\dag\ Mathematical Institute of the Serbian Academy of Sciences and Arts\\\phantom{\dag\ }Email: luka.milicevic@turing.mi.sanu.ac.rs}

\footnotesize
\begin{changemargin}{1in}{1in}
\centerline{\sc{\textbf{Abstract}}}
\phantom{a}\hspace{12pt}~Let $G_1, \dots, G_k$ be finite-dimensional vector spaces over a prime field $\mathbb{F}_p$. Let $V$ be a variety inside $G_1 \tdt G_k$ defined by a multilinear map. We show that if $|V| \geq c |G_1| \cdots |G_k|$, then $V$ contains a subvariety defined by at most $K(\log_{p} c^{-1} + 1)$ multilinear forms, where $K$ depends on $k$ only. This result is optimal up to multiplicative constant and is relevant to the partition vs. analytic rank problem in additive combinatorics.
\end{changemargin}
\normalsize

\section{Introduction}

Throughout the paper, we work with a prime field $\mathbb{F}_p$, which is fixed, and finite-dimensional vector spaces $G_1, \dots, G_k$ and $H$ over $\mathbb{F}_p$. We say that a function $\Phi : G_1\times G_2 \tdt G_k \to H$ is a\textit{ multilinear map} if it is linear in each of its $k$ arguments separately. Furthermore, if the codomain is $\mathbb{F}_p$ instead of $H$, we say that $\Phi$ is a \textit{multilinear form}.\\ 

An important goal in additive combinatorics is to obtain a precise understanding of multilinear forms that are not quasirandom. One way of formalizing quasirandomness in this context is to consider the distribution of values of a multilinear form $\alpha : G_1\tdt G_k \to \mathbb{F}_p$. If we fix $x_1, \dots, x_{k-1}$, we may consider the corresponding linear form $y_k \mapsto \alpha(x_1, \dots, x_{k-1}, y_k)$. Since it is a linear form, it is either 0, or it takes all values $0,1, \dots, p-1 \in \mathbb{F}_p$ an equal number of times. Hence, the deviation of distribution of values of $\alpha$ from the uniform distribution corresponds to the frequency of 0 as the value, and a particularly elegant way to express this is to consider the \textit{bias}, defined by
\[\on{bias} \alpha = \exx_{x_1 \in G_1, \dots, x_k \in G_k} \omega^{\alpha(x_1, \dots, x_k)},\]
where $\exx_{x \in X}$ stands for the average over elements $x$ of a set $X$ and $\omega = \exp(2 \pi i / p)$. Hence, small bias corresponds to quasirandom behaviour. A closely related quantity is the \textit{analytic rank}, defined in~\cite{GowWolf} as $\on{arank} \alpha = \log_p \on{bias} \alpha^{-1}$.\\

Let us remark that there are algebraic obstructions to uniform distribution of values. For example, if $\alpha(x_1, \dots, x_k)$ factorizes as $\beta(x_I) \gamma(x_{[k] \setminus I})$, where $\beta$ is a multilinear form on variables with indices in a set $I\subseteq [k]$ and $\gamma$ is a multilinear form on the remaining variables, then it is easy to see that $\on{bias} \alpha \geq 1/p$. A remarkable fact is that this is essentially the only way to have large bias. To make this formal, define the \textit{partition rank}~\cite{Naslund} of a multilinear form $\alpha$, denoted $\on{prank} \alpha$, as the smallest number $r$ such that $\alpha$ can be written as a sum of $r$ multilinear forms that factorize in the way above. It is not hard to see that $\on{prank} \alpha = r$ implies $\on{bias} \alpha \geq p^{-r}$. In the opposite direction we have the following theorem.

\begin{theorem}\label{prankthm}
    For each $k,r \in \mathbb{N}$ there exists a positive integer $K = K(k, r)$ for which the following holds. Let $\alpha : G_1\tdt G_k \to \mathbb{F}_p$ be a multilinear form such that $\on{arank} \alpha \leq r$. Then $\on{prank}\alpha \leq K$.
\end{theorem}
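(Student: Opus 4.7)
Proof proposal: By induction on $k$. The base case $k=1$ is trivial: a linear form of finite analytic rank vanishes identically.

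For the inductive step, let $\Phi:G_1\tdt G_{k-1}\to G_k^*$ be the multilinear map $(x_1,\dots,x_{k-1})\mapsto\alpha(x_1,\dots,x_{k-1},\cdot)$. Averaging over $x_k$ shows $\on{bias}\alpha=\Pr_{x_1,\dots,x_{k-1}}[\Phi(x_1,\dots,x_{k-1})=0]$, since $\exx_{x_k}\omega^{\alpha(x_1,\dots,x_k)}$ is $1$ when the linear form $x_k\mapsto\alpha(x_1,\dots,x_{k-1},x_k)$ vanishes and $0$ otherwise. Hence the multilinear variety $V=\Phi^{-1}(0)\subseteq G_1\tdt G_{k-1}$ has density at least $p^{-r}$. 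The main theorem of the paper, applied with $c=p^{-r}$, then produces multilinear forms $\beta_1,\dots,\beta_m:G_1\tdt G_{k-1}\to\mathbb{F}_p$, with $m\leq K'(r+1)$ for some $K'=K'(k)$, whose common zero set $W=\{\beta_1=\dots=\beta_m=0\}$ is contained in $V$; equivalently, $\alpha$ vanishes identically on $W\times G_k$.

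It remains to convert this vanishing into a partition rank bound. The cleanest hope would be a multilinear Nullstellensatz: each slice $\alpha(\cdot,y)$ lies in the $\mathbb{F}_p$-span of $\beta_1,\dots,\beta_m$, which combined with linearity in $y$ would give $\alpha(x,y)=\sum_i c_i(y)\beta_i(x)$ and hence $\on{prank}\alpha\leq m$ immediately. This is too optimistic in general: taking $\beta_1,\beta_2$ to encode the real and imaginary parts of multiplication in $\mathbb{F}_{p^2}$ yields $W=\{x_1=0\}\cup\{x_2=0\}$, on which every bilinear form vanishes trivially, while the span of $\beta_1,\beta_2$ is only two-dimensional. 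A more careful argument is required.

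The strategy I would pursue is a secondary induction, with a pointwise decomposition as the starting ingredient. Fixing $(x_1,\dots,x_{k-2})$, the linear forms $\beta_i(x_1,\dots,x_{k-2},\cdot)$ on $G_{k-1}$ cut out a subspace of codimension at most $m$ on which $(x_{k-1},x_k)\mapsto\alpha(x_1,\dots,x_k)$ vanishes in the first argument, so this bilinear form has rank at most $m$. After preprocessing the $\beta_i$ to be ``generic enough'' (for instance linearly independent in a robust sense as $(x_1,\dots,x_{k-2})$ varies), the rank-$m$ factorization can be chosen consistently to produce an identity
\[
\alpha(x_1,\dots,x_k)=\sum_{i=1}^m\beta_i(x_1,\dots,x_{k-1})\,\gamma_i(x_1,\dots,x_{k-2},x_k)
\]
with each $\gamma_i$ a multilinear form in $k-1$ variables. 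The main obstacle is then the overlap of variables between $\beta_i$ and $\gamma_i$: the product $\beta_i\gamma_i$ is quadratic in each of $x_1,\dots,x_{k-2}$, and so the summands on the right are not partition-rank-$1$ terms on the nose. Resolving this requires invoking the inductive hypothesis on the $\gamma_i$. An averaging argument should bound the analytic rank of each $\gamma_i$ in terms of $r$ and $m$, so by induction each admits a partition-rank decomposition of bounded size; substituting these back and regrouping, using that multilinearity of $\alpha$ forces the non-multilinear parts to cancel, yields the target bound, with $K(k,r)$ compounding $K'(r+1)$ and the inductive partition-rank bounds from one lower arity.
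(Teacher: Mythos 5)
The paper does not actually prove Theorem~\ref{prankthm}; it states it for context and cites the literature (Green--Tao, Kaufman--Lovett, Bhowmick--Lovett, Janzer, Mili\'cevi\'c, Moshkovitz--Zhu). So there is no in-paper argument to compare against. Your proposal is to deduce Theorem~\ref{prankthm} from the paper's main result, Theorem~\ref{weakinverse}; but the author explicitly calls Theorem~\ref{weakinverse} a \emph{weak} structural result and explains that in~\cite{LukaRank} the two statements were established jointly via a ``complicated inductive scheme.'' In other words, the direction you want is precisely the part that is genuinely hard, and it is not a short corollary.

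The first half of your argument is correct and standard: for $\Phi\colon G_{[k-1]}\to G_k^*$, one has $\operatorname{bias}\alpha=|\Phi^{-1}(0)|/|G_{[k-1]}|$, so $V=\Phi^{-1}(0)$ has density at least $p^{-r}$, and Theorem~\ref{weakinverse} gives $W=\{\beta_1=\cdots=\beta_m=0\}\subseteq V$ with $m\leq K(k-1)(r+1)$, whence $\alpha$ vanishes on $W\times G_k$. Your observation that a naive ``multilinear Nullstellensatz'' fails, with the $\mathbb{F}_{p^2}$ example, is also apt.

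The remainder, however, is a plan rather than a proof, and the gaps are exactly where the content lies. (i) The forms $\beta_i$ defining $W$ may depend on proper subsets of $x_1,\dots,x_{k-1}$; for a fixed $x_{[k-2]}$, if some $\beta_i$ not involving $x_{k-1}$ is nonzero there, the slice of $W$ is empty, and your claimed codimension-$\le m$ subspace in $G_{k-1}$, hence the rank-$\le m$ bound on the bilinear slice of $\alpha$, simply does not follow. ``Preprocessing the $\beta_i$ to be generic enough'' is named but not carried out, and it is not clear it can be. (ii) Even with a pointwise rank-$m$ bound, producing a single global identity $\alpha=\sum_i\beta_i\gamma_i$ with multilinear $\gamma_i$ requires gluing the fibrewise decompositions linearly in $x_{[k-2]}$, which is not a formality. (iii) Most seriously, the proposed identity is quadratic in each of $x_1,\dots,x_{k-2}$, so the summands are not partition-rank-one terms, and you offer no mechanism by which the inductive hypothesis on the $\gamma_i$ or an unspecified ``averaging argument'' removes this. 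That multilinearity of $\alpha$ forces the quadratic parts to cancel \emph{in the sum} does not let you discard them \emph{term by term}, which is what a partition-rank decomposition demands. This obstruction is precisely what necessitates the interlocking-lemma machinery of~\cite{LukaRank} and subsequent works; the step you elide is essentially the whole theorem beyond what Theorem~\ref{weakinverse} already gives.
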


This theorem was first proved in the context of polynomials rather than multilinear forms by Green and Tao~\cite{GreenTaoPolys}. Their approach was refined by Kaufman and Lovett~\cite{KaufmanLovett} and adjusted to the multilinear setting by Bhowmick and Lovett~\cite{BhowLov}. These results gave Ackermann-type dependence on the analytic rank. An improvement was obtained by Janzer~\cite{Janzer1}, which gave tower-type bounds. The dependence was improved to polynomial bounds by Janzer~\cite{Janzer2} and by the author~\cite{LukaRank}. Finally, the current bounds
\[\on{prank} \alpha \leq K(k) \Big(\on{arank} \alpha (\log_p (\on{arank} \alpha + 1) + 1) \Big)\]
were obtained by Moshkovitz and Zhu~\cite{MoshZhu}. See also the discussion in~\cite{MoshZhu} for other related results.\\

The key open problem in this context, stated in~\cite{AdipKazhZie, KazhZie, LamZie, LovettArank}, is to show that the partition rank is linear in the analytic rank. In this paper, we make a contribution to this problem by showing linear bounds for a weaker structural result, which was proved with weaker bounds and played a key role in an earlier proof~\cite{LukaRank} of (polynomial-bounds version of) Theorem~\ref{prankthm}. We will discuss the relevance to the proof in~\cite{LukaRank} after the statement. First, we need some further definitions.\\
\indent By a \textit{multilinear variety} in $G_1 \tdt G_k$ we think of a zero set of a collection of multilinear maps, which are allowed to depend on a proper subset of coordinates. For example, $U \times V \subseteq G_1 \times G_2$ for some subspaces $U \leq G_1$ and $V \leq G_2$ is also a multilinear variety. We say that a multilinear variety $V \subseteq G_1 \tdt G_k$ has \textit{codimension at most} $r$ if it can be represented as a common zero set of at most $r$ multilinear forms.\\ 
\indent The next theorem is our main result. It shows that dense multilinear varieties contain low-codimensional subvarieties. We call it a weak structural result for biased multilinear forms, for the reasons explained after the statement.

\begin{theorem}[Weak structural result with linear bounds] \label{weakinverse} For each $k \in \mathbb{N}$ there exists a positive integer $K = K(k)$ (independent of $p$) for which the following holds. Let $V \subseteq G_{[k]}$ be a multilinear variety of density $c$. Then $V$ contains a multilinear variety of codimension at most $K (\log_{p} c^{-1} + 1)$.\end{theorem}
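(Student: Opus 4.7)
I will proceed by induction on $k$. The base case $k=1$ is immediate: a multilinear variety in $G_1$ is the zero set of linear forms, hence a subspace of codimension exactly $\log_p c^{-1}$, so $V$ itself is the required subvariety with $K(1)=1$.

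For the inductive step, the natural first move is to slice $V$ along the last coordinate. Writing $V$ as the common zero set of multilinear forms $\alpha_1,\dots,\alpha_m$ and setting $V_{x_k}=\{x_{[k-1]}:(x_{[k-1]},x_k)\in V\}$, an averaging argument produces a set $B\subseteq G_k$ of density at least $c/2$ on which $|V_{x_k}|\geq (c/2)|G_{[k-1]}|$. The inductive hypothesis then provides, for each $x_k\in B$, a multilinear subvariety $W(x_k)\subseteq V_{x_k}$ of codimension at most $K(k-1)(\log_p c^{-1}+O(1))$.

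The main obstacle is the \emph{lift}: gluing the family $\{W(x_k)\}_{x_k\in B}$ into a single multilinear subvariety of $V$ inside $G_{[k]}$. The inductive hypothesis guarantees existence of each $W(x_k)$ but no parametric regularity in $x_k$. I would address this by strengthening the inductive statement to a parameterized version -- producing subvarieties whose defining forms depend multilinearly on the slicing variable -- driven by the Fourier identity
\[
c\;=\;\frac{1}{p^m}\sum_{\lambda\in\mathbb{F}_p^m}\on{bias}(\alpha_\lambda),\qquad \alpha_\lambda:=\sum_{i=1}^m\lambda_i\alpha_i,
\]
which forces at least a $c/2$ fraction of the $\lambda$ to satisfy $\on{arank}(\alpha_\lambda)\leq \log_p c^{-1}+1$. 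Such biased forms carry enough structure (in a form weaker than the full partition-rank theorem, so as to avoid circularity) to support the parameterized construction.

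To keep the final bound linear in $\log_p c^{-1}$, the case $x_k\notin B$ must be handled without a multiplicative blow-up. I would do this by restricting $x_k$ to the subspace $U=\mathrm{span}(B)\leq G_k$, whose codimension is at most $\log_p(2/c)$, absorbing only an additive $O(\log_p c^{-1})$ into the final codimension count. The recursion then closes as $K(k)=K(k-1)+O(1)$, giving a constant depending on $k$ alone. The hardest step will be implementing the lift -- turning the bias expansion into a concrete multilinear parameterization of the inductive subvarieties -- and this is where the improvement over the polynomial bounds obtained in \cite{LukaRank} should come from.
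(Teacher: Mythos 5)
Your base case and the initial slicing step match the paper's proof, but what you have written is a plan, not a proof: you correctly identify the ``lift'' as the crux and then leave it as the open hard step. The parameterized-induction idea does not obviously work, because the inductive hypothesis applied to $V_{x_k}$ produces a subvariety whose defining forms have no reason to vary multilinearly in $x_k$; the bias identity tells you many linear combinations $\alpha_\lambda$ are biased, but it does not hand you a family of subvarieties that fit together into a single multilinear variety in $G_{[k]}$. Moreover, restricting $x_k$ to $\mathrm{span}(B)$ does not resolve the bad slices: $B$ need not be a subspace, points of $\mathrm{span}(B)\setminus B$ still have sparse fibers, so this restriction buys nothing structurally.

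The paper sidesteps the lift entirely, and this is the idea you are missing. Instead of gluing $\{W(x_k)\}$ across slices, it fixes a \emph{single} good slice $U=V_{x_{k+1}^0}$ chosen so that the set $B$ of points of $U$ with sparse $V$-fiber is tiny (Lemma~\ref{denseColumnsLemma}), applies the inductive hypothesis once to $U$ to get a low-codimension $W\subseteq U$, and then invokes a directional convolution lemma (Lemma~\ref{convvar}) for varieties: since $B$ is sparse inside the low-codimension variety $W$, every $x_{[k]}\in W$ is a vertex of a combinatorial parallelepiped in $W\setminus B$, which forces $V_{x_{[k]}}$ to contain an intersection of $2^k$ dense subspaces and hence be dense. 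Doing this in all $k+1$ directions and intersecting gives a variety $\tilde V\subseteq V$ all of whose one-dimensional fibers over the cylinder sets $U^{(i)}$ are dense. The closing move is the external approximation lemma (Lemma~\ref{externalApprox}): approximate $\tilde V$ from outside by a variety $A$ of codimension $\log_p\varepsilon^{-1}$, and show by a fiber-expansion argument (iteratively replacing one coordinate at a time, each step multiplying the count by a dense-fiber factor $c''$) that $\tilde A = A\cap\bigl(\bigcap_i G_i\times U^{(i)}\bigr)$ must equal $\tilde V$ once $\varepsilon<(c'')^{k+1}$. This chain of lemmas, not a parameterized lift, is what keeps the codimension linear in $\log_p c^{-1}$; without a concrete replacement for it your argument has a genuine gap.
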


Going back to Theorem~\ref{prankthm}, the proof in~\cite{LukaRank} was a complicated inductive scheme, where Theorems~\ref{prankthm} and~\ref{weakinverse} were two of several steps that implied each other in the inductive step. In particular, Theorem~\ref{prankthm} for lower arity was used to infer Theorem~\ref{weakinverse} (with polynomial bounds). In this paper, we circumvent this, which opens up the possibility of improving the bounds given by the proof in~\cite{LukaRank}.\\
\indent We remark that biased multilinear forms correspond to dense multilinear varieties. Namely, if $\alpha(x_{[k]})$ has bias $c$, then we may use a dot product $\cdot$ on $G_k$ to write $\alpha(x_{[k]}) = A(x_{[k-1]}) \cdot x_k$ for some multilinear map $A : G_1 \tdt G_{k-1} \to G_k$. The equality $|\{x_{[k-1]} \in G_{[k-1]} : A(x_{[k-1]}) = 0\}| = \on{bias} \alpha |G_{[k-1]}|$ then holds. Thus, for a biased multilinear form $\alpha$, the theorem above allows us to find a multilinear variety of lower order and of essentially optimal codimension inside the zero set of $\alpha$, which is a structural result on $\alpha$.\\
\indent Let us mention a related result of Chen and Ye~\cite{ChenYe}, who recently showed that the geometric rank, which is the algebro-geometric codimension of the variety $\{A = 0\}$ above, is linear in $\on{arank}\alpha$. Our result shows that algebro-geometric codimension of some irreducible component of $\{A = 0\}$ satisfies linear bounds in $\on{arank} \alpha$. On the other hand, our definition of codimension is more informative than its algebro-geometric counterpart. Moreover, our proof is direct and completely combinatorial, while the proof in~\cite{ChenYe} depends on various tools in algebraic geometry, including Lang-Weil bounds.\\

\noindent\textbf{Acknowledgements.} This research was supported by the Ministry of Science, Technological Development and Innovation of the Republic of Serbia through the Mathematical Institute of the Serbian Academy of Sciences and Arts, and by the Science Fund of the Republic of Serbia, Grant No.\ 11143, \textit{Approximate Algebraic Structures of Higher Order: Theory, Quantitative Aspects and Applications} - A-PLUS.

\section{Preliminaries} 

Given an index set $I \subseteq [k]$, we write $G_I = \prod_{i \in I} G_i$. Given a set $X \subseteq G_1 \tdt G_k$, and a tuple $x_I$, consisting of elements $x_i \in G_i$ for $i \in I$, we define the \textit{slice} $X_{x_I}$ to be the set $\{y_{[k] \setminus I} \in G_{[k] \setminus I}: (x_I, y_{[k] \setminus I}) \in X\}$.\\

The next lemma allows us to approximate dense varieties by low-codimensional ones.

\begin{lemma}[Approximating dense varieties externally, Lemma 12 in~\cite{LukaRank}]\label{externalApprox} Let $\Phi \colon G_{[k]} \to H$ be a multilinear map. Then, there is a multilinear map $\phi \colon G_{[k]} \to \mathbb{F}_p^s$ such that $\{\Phi = 0\} \subset \{\phi = 0\}$ and $|\{\phi = 0\} \setminus \{\Phi = 0\}| \leq p^{-s}|G_{[k]}|$.\end{lemma}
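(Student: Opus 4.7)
The natural strategy is a probabilistic/averaging argument: take a random linear map $\pi : H \to \mathbb{F}_p^s$ and set $\phi = \pi \circ \Phi$. Since $\Phi$ is multilinear and $\pi$ is linear, the composition $\phi$ is automatically multilinear into $\mathbb{F}_p^s$, which is the desired form. The inclusion $\{\Phi = 0\} \subseteq \{\phi = 0\}$ is immediate: if $\Phi(x) = 0$ then $\phi(x) = \pi(0) = 0$.

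The point that needs work is the size of the set difference. Sample $\pi$ uniformly from $\on{Hom}(H, \mathbb{F}_p^s)$ (equivalently, choose $s$ independent elements of $H^*$ uniformly and use them as the coordinates of $\pi$). For any fixed nonzero $h \in H$, the evaluation map $\pi \mapsto \pi(h)$ is a surjective linear map from $\on{Hom}(H, \mathbb{F}_p^s)$ onto $\mathbb{F}_p^s$, so $\pi(h)$ is uniformly distributed on $\mathbb{F}_p^s$ and in particular equals $0$ with probability exactly $p^{-s}$. Consequently, for each $x \in G_{[k]}$ with $\Phi(x) \neq 0$, one has $\mathbb{P}[\phi(x) = 0] = p^{-s}$.

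Summing over $x$ gives
\[
\exx_{\pi}\; \bigl|\{\phi = 0\} \setminus \{\Phi = 0\}\bigr|
= \sum_{x : \Phi(x) \neq 0} \mathbb{P}\bigl[\pi(\Phi(x)) = 0\bigr]
\leq p^{-s} |G_{[k]}|.
\]
Hence some realization of $\pi$ achieves the bound; fixing this $\pi$, the corresponding $\phi = \pi \circ \Phi$ has the required properties.

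There is really no serious obstacle here: the only subtle point is to make sure that $\pi(h)$ is uniformly distributed on $\mathbb{F}_p^s$ for every nonzero $h$ (which is why one samples $\pi$ as a random element of $\on{Hom}(H, \mathbb{F}_p^s)$ rather than, say, a random surjection), and then the rest is a one-line first moment computation.
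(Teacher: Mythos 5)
Your argument is correct: composing $\Phi$ with a uniformly random $\pi \in \operatorname{Hom}(H,\mathbb{F}_p^s)$ and applying a first-moment bound to the number of points $x$ with $\Phi(x)\neq 0$ but $\pi(\Phi(x))=0$ gives exactly the claimed inequality, and the inclusion $\{\Phi=0\}\subseteq\{\phi=0\}$ and the multilinearity of $\phi=\pi\circ\Phi$ are immediate. Note that the present paper does not reprove this lemma but cites it verbatim as Lemma 12 of~\cite{LukaRank}; your random-projection argument is the standard one and coincides with the proof given there, so there is nothing further to compare.
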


The next lemma shows that we may use directional convolutions to fill in very dense subsets of low-codimensional varieties. To state it, we define \textit{convolution in direction} $i$ as the operator that acts on functions $f: G_{[k]} \to \mathbb{R}$ as $\bigconv{i} f(x_{[k]}) = \exx_{y_i \in G_i} f(x_{[i-1]}, y_i + x_i, x_{[i+1, k]}) f(x_{[i-1]}, y_i, x_{[i+1, k]})$.

\begin{lemma}[Directional convolutions of varieties]\label{convvar} Let $W \subseteq G_{[k]}$ be a multilinear variety of codimension $r$. Suppose that $B \subseteq W$ is a subset of size $|B| \leq 2^{-2k} p^{-kr} |G_{[k]}|$. Then
\[\bigconv{k}\bigconv{k-1} \dots \bigconv{1} \id_{W \setminus B}(x_{[k]}) > 0\]
holds for all $x_{[k]} \in W$.  
\end{lemma}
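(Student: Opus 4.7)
I plan to induct on $k$, with $r$ arbitrary, exploiting two elementary structural observations: fixing one coordinate of a multilinear variety leaves it a multilinear variety of no larger codimension in one fewer variable, and fixing all but one coordinate yields a linear subspace of the remaining factor.

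For the base case $k=1$, the variety $W \leq G_1$ is a linear subspace of codimension at most $r$, and $x_1 \in W$ gives $W + x_1 = W$. The convolution $\bigconv{1}\id_{W \setminus B}(x_1)$ is $|G_1|^{-1}$ times the number of $y_1 \in W$ such that $y_1, y_1 + x_1 \in W \setminus B$, which is at least $|W| - 2|B| \geq p^{-r}|G_1| - 2\cdot 2^{-2} p^{-r}|G_1| > 0$.

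For the inductive step, fix $x_{[k]} \in W$ and expand the outer convolution as
\[
\bigconv{k}\bigconv{k-1}\cdots\bigconv{1} \id_{W \setminus B}(x_{[k]}) = \ex_{y_k \in G_k} F(y_k + x_k)\, F(y_k), \quad F(y_k) := \bigconv{k-1}\cdots\bigconv{1}\id_{W_{y_k} \setminus B_{y_k}}(x_{[k-1]}),
\]
where the identification of $F(y_k)$ uses the fact that the convolutions in directions $1, \dots, k-1$ do not see the last coordinate, so evaluating at $(\cdot, y_k)$ coincides with slicing first. Substituting a fixed $y_k$ into each defining form of $W$ leaves it multilinear in $x_{[k-1]}$, so $W_{y_k}$ is a multilinear variety in $G_{[k-1]}$ of codimension at most $r$ and $B_{y_k} \subseteq W_{y_k}$. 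By the inductive hypothesis, $F(y_k) > 0$ whenever $x_{[k-1]} \in W_{y_k}$ and $|B_{y_k}| \leq 2^{-2(k-1)} p^{-(k-1)r} |G_{[k-1]}|$.

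The crux is producing one $y_k$ for which both $F(y_k)$ and $F(y_k + x_k)$ are positive simultaneously, and this is the one place where the assumption $x_{[k]} \in W$ is used in a nontrivial way. The set $L := \{y_k \in G_k : (x_{[k-1]}, y_k) \in W\}$ is a linear subspace of $G_k$ cut out by at most $r$ linear forms in $y_k$ (those defining forms of $W$ that involve the $k$-th variable; the others vanish identically on $G_k$ because $x_{[k]} \in W$), hence $|L| \geq p^{-r}|G_k|$. Crucially, $x_k \in L$, so by linearity $y_k \in L$ forces $y_k + x_k \in L$ automatically. An averaging argument shows that at most $|B|/T \leq 2^{-2} p^{-r} |G_k|$ values of $y_k$ have $|B_{y_k}| > T := 2^{-2(k-1)} p^{-(k-1)r} |G_{[k-1]}|$, and the same bound controls $|B_{y_k + x_k}|$; a union bound excludes fewer than $2^{-1} p^{-r}|G_k| < |L|$ elements of $L$, leaving a good $y_k$. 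The only delicate point is that each inductive step consumes exactly one factor of $2^{-2}$ (from the two Markov estimates) and one factor of $p^{-r}$ (from the size of $L$), so the hypothesis $|B| \leq 2^{-2k} p^{-kr}|G_{[k]}|$ is precisely tight for this argument.
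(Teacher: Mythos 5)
Your proof is correct and takes essentially the same route as the paper: induct on $k$, peel off the outer convolution, apply the inductive hypothesis to the slices $W_{y_k}$, and use a Markov/averaging bound together with the fact that $W_{x_{[k-1]}}$ is a subspace containing $x_k$ (hence closed under adding $x_k$) to find a common good $y_k$. Your write-up is slightly more explicit about why $L = W_{x_{[k-1]}}$ is a subspace, why $x_k \in L$ forces $y_k + x_k \in L$, and why the constants $2^{-2k}p^{-kr}$ are exactly consumed, but the argument is the same; the paper abbreviates these points (and has a typo $\geq 0$ in the last display where $>0$ is meant).
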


\begin{proof}
    We prove the claim by induction on $k$. The base case $k = 1$ is a standard fact in additive combinatorics. Turning to the inductive step, assume the claim holds for $k - 1$. Fix arbitrary $x_{[k]} \in W$. Let $A \subseteq G_k$ be the set of all $y_k$ such that $|B_{y_k}| \geq 2^{-2(k-1)}p^{-(k-1)r} |G_{[k-1]}|$. By averaging, $|A| \leq \frac{1}{4p^r}|G_k|$. Let $y_k \in W_{x_{[k-1]}} \setminus A$ be arbitrary. Hence, since $y_k \notin A$ and $W_{y_k} \not= \emptyset$ is a multilinear variety in $G_{[k-1]}$ of codimension at most $r$, then we may apply inductive hypothesis to obtain 
    \[\bigconv{k-1} \dots \bigconv{1} \id_{W_{y_k} \setminus B_{y_k}}(x_{[k-1]}) > 0.\]
    Since $|A| < \frac{1}{2} |W_{x_{[k-1]}}|$, there exists $y_k$ such that $y_k, y_k + x_k$ both belong to $W_{x_{[k-1]}} \setminus A$. Thus
    \[\bigconv{k}\bigconv{k-1} \dots \bigconv{1} \id_{W \setminus B}(x_{[k]}) \geq 0.\qedhere\]
\end{proof}

\section{Proof of Theorem~\ref{weakinverse}}

We prove the theorem by induction on the arity $k$ of the multilinear maps in question. The following lemma is of key importance for the induction step. Assuming the inductive hypothesis, it allows us to find a bounded codimension multilinear variety $W$ such that fibers of the given variety $V$ associated to points of $W$ are dense. We use $O(\cdot)$ notation to hide implicit constants that depend on $k$ only, and not on $p$.

\begin{lemma}\label{denseColumnsLemma}Suppose that Theorem~\ref{weakinverse} for arity $k$ holds with constant $K = K(k)$. Let $V \subset G_{[k+1]}$ be a variety of density $c$. Then there exists a multilinear variety $W \subseteq G_{[k]}$ of codimension at most $K (\log_{p} c^{-1} + 1)$ such that for each $x_{[k]} \in W$ we have $|V_{x_{[k]}}| \geq (c/p)^{O(1)}  |G_{k+1}|$.\end{lemma}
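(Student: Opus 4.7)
The plan is to combine Lemma~\ref{externalApprox} with the inductive hypothesis and a refinement argument to produce the desired $W$. The strategy is to first use external approximation to bound $V$ by a low-codimension variety $V^\star$, then use the inductive hypothesis on a multilinear variety in $G_{[k]}$ derived from $V$, and finally refine via an averaging/convolution argument to convert average-fiber control into pointwise-fiber control.

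First, apply Lemma~\ref{externalApprox} to $V$ with parameter $s = \log_p c^{-1} + C$ for a suitable constant $C = C(k)$, producing a multilinear variety $V^\star \supseteq V$ of codimension at most $s$ in $G_{[k+1]}$ with $|V^\star \setminus V| \leq c\, p^{-C}|G_{[k+1]}|$. Splitting the defining forms of $V^\star$ into those involving $x_{k+1}$ and those that do not, I obtain a multilinear variety $V^\star_J \subseteq G_{[k]}$ of codimension at most $s$; for each $x_{[k]} \in V^\star_J$, the fiber $V^\star_{x_{[k]}}$ is a linear subspace of $G_{k+1}$ of codimension at most $s$, hence of size at least $p^{-s}|G_{k+1}| = (c/p)^{O(1)}|G_{k+1}|$. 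Next, the projection $V_J = \{x_{[k]} : V_{x_{[k]}} \neq \emptyset\}$ is itself a multilinear variety in $G_{[k]}$ (cut out by the $x_{k+1}$-independent forms of $V$) of density at least $c$; applying Theorem~\ref{weakinverse} for arity $k$ to $V_J$ yields a multilinear subvariety $W_0 \subseteq V_J \subseteq V^\star_J$ of codimension at most $K(k)(\log_p c^{-1} + 1)$.

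The remaining step is to refine $W_0$ to a subvariety $W$ on which $|V_{x_{[k]}}|/|G_{k+1}| \geq (c/p)^{O(1)}$ holds pointwise. On $W_0$ we have $V_{x_{[k]}} \subseteq V^\star_{x_{[k]}}$, and since both are linear subspaces (when $x_{[k]} \in V_J$), either they are equal or $|V^\star_{x_{[k]}} \setminus V_{x_{[k]}}| \geq (1 - 1/p)|V^\star_{x_{[k]}}|$. Combined with $|V^\star \setminus V| \leq c\, p^{-C}|G_{[k+1]}|$ and a suitable choice of $C$, this bounds the "bad" set $B \subseteq W_0$ on which $V_{x_{[k]}}$ is strictly smaller than $V^\star_{x_{[k]}}$. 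One then invokes Lemma~\ref{convvar} on the pair $(W_0, B)$ to extract a multilinear subvariety $W \subseteq W_0$ lying in the complementary good set, at the cost of only a constant-factor loss in codimension. For $x_{[k]} \in W$ we have $V_{x_{[k]}} = V^\star_{x_{[k]}}$ (or at worst a comparable subspace), yielding the target fiber-size bound.

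The main obstacle is precisely this refinement step. The bound $|V^\star \setminus V| \leq c\, p^{-C}|G_{[k+1]}|$ controls the discrepancy only on average, and in principle a few fibers of $V^\star$ could absorb essentially all of the discrepancy; the naive counting only gives a trivial bound on $|B|$. Overcoming this requires a careful interplay between the choice of $C$ (large enough that the bad set is a controllably small fraction of $W_0$), the subspace structure of the fibers (so that any strict inclusion $V_{x_{[k]}} \subsetneq V^\star_{x_{[k]}}$ automatically costs a $1-1/p$ factor), and Lemma~\ref{convvar} (which converts a dense good subset into a multilinear subvariety with a codimension blow-up depending only on $k$).
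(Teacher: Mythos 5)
The key difficulty you flagged at the end --- that the external approximation only controls the discrepancy $|V^\star\setminus V|$ on average, while the $(1-1/p)$ loss per bad fiber is measured relative to $|V^\star_{x_{[k]}}|$, which can be as small as $p^{-s}|G_{k+1}|$ --- is in fact fatal to your strategy, and the ``careful interplay'' you invoke does not exist. Concretely, with $s=\log_p c^{-1}+C$ the bound from Lemma~\ref{externalApprox} is $|V^\star\setminus V|\le c p^{-C}|G_{[k+1]}|$, while the cheapest way for a bad fiber to differ is to lose $(1-1/p)\,p^{-s}|G_{k+1}|=(1-1/p)\,cp^{-C}|G_{k+1}|$ elements. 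Dividing, the parameter $C$ cancels identically and you obtain $|B|\le \frac{p}{p-1}|G_{[k]}|$, which is a trivial bound: the bad set could be all of $G_{[k]}$, no matter how large you take $C$. The obstruction is structural, not merely a matter of tuning constants, because Lemma~\ref{externalApprox} gives you no control over how many of the $s$ defining forms of $V^\star$ involve the coordinate $x_{k+1}$, and hence no lower bound on fiber sizes better than $p^{-s}|G_{k+1}|$. There is also a secondary misreading: Lemma~\ref{convvar} does not ``extract a multilinear subvariety'' $W\subseteq W_0$ contained in the good set. It only asserts that the directional convolution of $\id_{W_0\setminus B}$ is positive at every point of $W_0$; the way this is actually used (as in the paper) is to produce, for each $x_{[k]}\in W_0$, a $2^k$-vertex parallelepiped in $W_0\setminus B$ and then exploit multilinearity of $V$ to show $V_{x_{[k]}}$ contains the intersection of the $2^k$ dense fibers at those vertices. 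Even if you apply the lemma correctly, you still need the nontrivial bound $|B|\le 2^{-2k}p^{-kr}|G_{[k]}|$, and your argument does not provide it.

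The paper's proof takes an entirely different route that avoids external approximation at this stage. Rather than approximating the $(k+1)$-ary variety $V$, one fixes a \emph{random} $x_{k+1}\in G_{k+1}$ and looks at the slice $U=V_{x_{k+1}}\subseteq G_{[k]}$, which is automatically a multilinear variety. The set $B$ of $x_{[k]}\in U$ with sparse fiber $|V_{x_{[k]}}|\le c'|G_{k+1}|$ is controlled by a single weighted first-moment computation: $\exx\bigl(|U|-\tfrac{c}{2c'}|B|\bigr)\ge\tfrac{c}{2}|G_{[k]}|$, which is where the key gain occurs --- each $x_{[k]}$ enters both sums weighted by $\mathbb{P}(x_{k+1}\in V_{x_{[k]}})=|V_{x_{[k]}}|/|G_{k+1}|$, and for bad $x_{[k]}$ this probability is itself small, so the bad set contributes little. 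This lets you choose an $x_{k+1}$ for which $U$ is dense ($\ge\tfrac{c}{2}$) \emph{and} $|B|\le 2c^{-1}c'|G_{[k]}|$ simultaneously, and the free parameter $c'$ can then be pushed as small as you like. Applying the inductive hypothesis to $U$, then Lemma~\ref{convvar} with this genuinely small bad set, closes the argument. Your attempt replaces the random-slice averaging by external approximation, which is where the quantitative information is lost; to salvage something along your lines you would need a version of Lemma~\ref{externalApprox} that separately controls the number of forms involving $x_{k+1}$, which the lemma as stated does not provide.
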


\begin{proof}
 Let $c' > 0$ be a parameter to be specified later. Let us pick a random $x_{k+1} \in G_{k+1}$ and consider the corresponding slice $U = V_{x_{k+1}}$ in $G_{[k]}$. Let $B \subseteq U$ be the set of all $x_{[k]} \in U$ such that $|V_{x_{[k]}}| \leq c' |G|$. Note that these points have sparse fibres of $V$, thus we think of them as bad points. By linearity of expectation, we have
\begin{align*}\exx |U| \,\,-\,\, \frac{c{c'}^{-1}}{2} |B| = &\sum_{x_{[k]} \in G_{[k]}} \mathbb{P}\Big(x_{[k]} \in V_{x_{k+1}}\Big) - \frac{c{c'}^{-1}}{2} \sum_{\ssk{x_{[k]} \in G_{[k]}\\|V_{x_{[k]}}| \leq c' |G_{k+1}|}} \mathbb{P}\Big(x_{[k]} \in V_{x_{k+1}}\Big)\\
=&\sum_{x_{[k]} \in G_{[k]}} \mathbb{P}\Big(x_{k+1} \in V_{x_{[k]}}\Big) - \frac{c{c'}^{-1}}{2} \sum_{\ssk{x_{[k]} \in G_{[k]}\\|V_{x_{[k]}}| \leq c' |G_{k+1}|}} \mathbb{P}\Big(x_{k+1} \in V_{x_{[k]}}\Big)\\
=&\sum_{x_{[k]} \in G_{[k]}} \frac{|V_{x_{[k]}}|}{|G_{k + 1}|} - \frac{c{c'}^{-1}}{2} \sum_{\ssk{x_{[k]} \in G_{[k]}\\|V_{x_{[k]}}| \leq c' |G_{k+1}|}} \frac{|V_{x_{[k]}}|}{|G_{k+1}|} \\
\geq & \,|G_{k+1}|^{-1} |V|\, - \,\frac{c}{2} |G_{[k]}| \,\,\geq \,\,\frac{c}{2} |G_{[k]}|.\end{align*}

Hence, there is a choice of element $x_{k+1} \in G_{k+1}$ for which $|U| \geq \frac{c}{2} |G_{[k]}|$ and $|B| \leq 2c^{-1} c' |G_{[k]}|$. Applying induction hypothesis, i.e. Theorem~\ref{weakinverse} for arity $k$, to $U$, we find a multilinear variety $W \subseteq U$ of codimension $r \leq K (\log_{p} c^{-1} + 2)$.\\ 

We set $c' =2^{-2k - 1} p^{-2kK} c^{k K + 1}$. This ensures that $c' \leq 2^{-k} c p^{-kr}$, allowing us to use Lemma~\ref{convvar} to obtain $\bigconv{k}\bigconv{k-1} \dots \bigconv{1} \id_{W \setminus B}(x_{[k]}) > 0$ for all $x_{[k]} \in W$. It follows that whenever $x_{[k]} \in W$, then $V_{x_{[k]}} \geq {c'}^{2^k} |G_{k+1}|$, since we have intersection of $2^k$ subspaces $V_{a_{[k]}}$ of density at least $c'$, where $a_{[k]}$ ranges over $2^k$ points of a parallelepiped in $W \setminus B$ that attests to $\bigconv{k}\bigconv{k-1} \dots \bigconv{1} \id_{W \setminus B}(x_{[k]}) > 0$.\end{proof}

We are now ready to prove Theorem~\ref{weakinverse}.

\begin{proof}[Proof of Theorem~\ref{weakinverse}] As mentioned before, we prove the theorem by induction on $k$. The base case is $k = 1$ when $V$ becomes a subspace of density $c$. Then $V$ is a subspace of codimension exactly $\log_{p} c^{-1}$, completing the proof of the base of induction, with $K(1) = 1$.\\

We now turn to the inductive step. Suppose that Theorem~\ref{weakinverse} holds for arity $k$ with constant $K = K(k)$. For each $i \in [k+1]$ apply Lemma~\ref{denseColumnsLemma} to get a multilinear variety $W^{(i)} \subseteq G_{[k+1] \setminus \{i\}}$ of codimension $r \leq K (\log_{p} c^{-1} + 2)$ such that $|V_{x_{[k+1] \setminus \{i\}}}| \geq c' |G_i|$ for all $x_{[k+1] \setminus \{i\}} \in W^{(i)}$, where $c' \geq (c/p)^{O(1)}$. We slightly modify varieties $W^{(i)}$ by defining further multilinear varieties $U^{(i)}$ as zero sets of all multilinear forms that do not depend on variable $x_i$ that appear in definition of some $W^{(j)}$. Thus, we allow repeated use of multilinear forms defining varieties $W^{(j)}$ and the codimension of $U^{(i)}$ is at most $(k+1)r$. Set $\tilde{V} = V \cap \Big(\cap_{i \in [k+1]} G_i \times U^{(i)}\Big)$. Note that variety $\tilde{V}$ has the property that whenever $x_{[k + 1] \setminus \{i\}} \in U^{(i)}$ then $|\tilde{V}_{x_{[k+1] \setminus \{i\}}}| \geq c'' |G_i|$, where $c'' \geq (c/p)^{O(1)}$. This follows from the fact that $\tilde{V}_{x_{[k+1] \setminus \{i\}}}$ is intersection of the subspace $V_{x_{[k+1] \setminus \{i\}}}$, which is dense as $x_{[k+1] \setminus \{i\}} \in U^{(i)} \subseteq W^{(i)}$, with at most $k$ subspaces, namely $U^{(j)}_{x_{[k+1] \setminus \{i\}}}$, of codimension $(k+1)r$. Let $\varepsilon > 0$ be a parameter to be specified later. Use Lemma~\ref{externalApprox} to approximate $\tilde{V}$ externally up to error density $\varepsilon$ by a multilinear variety $A$ of codimension $\log_{p} \varepsilon^{-1}$. Set $\tilde{A} =A \cap \Big(\cap_{i \in [k+1]} G_i \times U^{(i)}\Big)$. Note that $|\tilde{A} \setminus \tilde{V}| \leq |A \setminus \tilde{V}| \leq \varepsilon |G_{[k+1]}|$. We claim that $\tilde{A} = \tilde{V}$.

\begin{claim}If $\varepsilon < {c''}^{\,k+1}$ then varieties $\tilde{A}$ and $\tilde{V}$ are the same.\end{claim}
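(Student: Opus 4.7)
The plan is to argue by contradiction: supposing some $x_{[k+1]} \in \tilde A \setminus \tilde V$, I will propagate it into an exponentially large bad set that violates the external-approximation bound of Lemma~\ref{externalApprox}. The first observation is that since $\tilde A \subseteq \bigcap_{i} G_i \times U^{(i)}$ while $\tilde V = V \cap \bigcap_i G_i \times U^{(i)}$, one in fact has $\tilde A \setminus \tilde V = \tilde A \setminus V$, and consequently $|\tilde A \setminus V| \leq |A \setminus \tilde V| \leq \varepsilon |G_{[k+1]}|$. In particular, $\tilde V \subseteq \tilde A$ is automatic, so only the reverse inclusion needs to be established.

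The heart of the argument is a coordinate-by-coordinate propagation. Starting from the supposed $x_{[k+1]} \in \tilde A \setminus V$, I would first swap in a new value in coordinate $1$: since $x_{[k+1]} \in \tilde A$ gives $x_{[k+1] \setminus \{1\}} \in U^{(1)}$, the density property of $\tilde V$ ensures that the subspace $\tilde A_{x_{[k+1] \setminus \{1\}}} \subseteq G_1$, which contains $\tilde V_{x_{[k+1] \setminus \{1\}}}$, has density at least $c''$. On the other hand, $V_{x_{[k+1] \setminus \{1\}}}$ is itself a subspace of $G_1$ (as $V$ is multilinear) that avoids $x_1$, so its intersection with $\tilde A_{x_{[k+1] \setminus \{1\}}}$ is a proper subspace of the latter, yielding at least $(1-1/p) c'' |G_1|$ choices of $y_1$ with $(y_1, x_{[2,k+1]}) \in \tilde A \setminus V$. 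Iterating through coordinates $2, \ldots, k+1$, each step preserves the invariant ``current tuple lies in $\tilde A \setminus V$'' and produces at least $(1-1/p) c'' |G_i|$ admissible replacements in the $i$-th coordinate.

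Multiplying through, one obtains at least $((1-1/p)c'')^{k+1}|G_{[k+1]}|$ distinct points in $\tilde A \setminus V$; since $(1-1/p)^{k+1}$ is a $k$-dependent constant absorbable into the $O(1)$ in $c'' = (c/p)^{O(1)}$, the condition $\varepsilon < c''^{k+1}$ suffices for a contradiction. The main bookkeeping obstacle is maintaining the chain of $U^{(i)}$-preconditions as the iteration proceeds: to invoke the density lower bound on the fiber in coordinate $i$ one needs the current tuple with its $i$-th coordinate deleted to lie in $U^{(i)}$. This is automatic, however, from membership in $\tilde A \subseteq G_i \times U^{(i)}$, so the invariant carries through cleanly at every step and the propagation never stalls.
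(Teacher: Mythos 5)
Your proposal follows essentially the same argument as the paper: a coordinate-by-coordinate propagation that turns one point of $\tilde A \setminus \tilde V$ into exponentially many, contradicting the external-approximation bound. The observation that $\tilde A \setminus \tilde V = \tilde A \setminus V$, the use of the $U^{(i)}$-membership to unlock the density lower bound on $\tilde V$-fibers at each step, and the proper-subspace argument in each coordinate all match the paper's proof. The only deviation is a small numerical slip: you lower-bound the number of new admissible coordinates by $(1-1/p)\,|\tilde A_{\ldots}|$, picking up a $(1-1/p)^{k+1}$ loss, so as written you only prove the claim for $\varepsilon < \bigl((1-1/p)c''\bigr)^{k+1}$ rather than $\varepsilon < {c''}^{k+1}$. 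The paper avoids this by bounding the count of new choices below by $|\tilde V_{\ldots}| \geq c''|G_{i+1}|$ (valid since $\tilde A_{\ldots} \setminus \tilde V_{\ldots}$ has size $\geq (p-1)|\tilde V_{\ldots}| \geq |\tilde V_{\ldots}|$ when the containment is proper), which gives exactly ${c''}^{k+1}$. You correctly note that your weaker constant could be absorbed into $c''$, but the cleaner bound makes the claim go through verbatim; otherwise your argument is sound and the same in spirit.
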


\begin{proof}Suppose that $\tilde{V} \subsetneq \tilde{A}$. There exists a point $x_{[k+1]} \in \tilde{A} \setminus \tilde{V}$. By induction on $i \in [0, k+1]$ we show that there exist at least ${c''}^{\,i} |G_{[i]}|$ of choices of $y_{[i]}$ such that $(y_{[i]}, x_{[i+1, k+1]}) \in \tilde{A} \setminus \tilde{V}$. The base case $i = 0$ is trivial. Supposing the claim holds for some $i$, take any $(y_{[i]}, x_{[i+1, k+1]}) \in \tilde{A} \setminus \tilde{V}$. Hence, $\tilde{V}_{y_{[i]}, x_{[i+2, k+1]}}$ is a proper subspace of $\tilde{A}_{y_{[i]}, x_{[i+2, k+1]}}$. Therefore, the number of $y_{i+1}$ such that $(y_{[i+1]}, x_{[i+2, k+1]}) \in \tilde{A} \setminus \tilde{V}$ is at least $|\tilde{V}_{y_{[i]}, x_{[i+2, k+1]}}|$, which is at least $c''|G_{i+1}|$, since $(y_{[i]}, x_{[i+1, k+1]}) \in \tilde{A}$, so $(y_{[i]}, x_{[i+2, k+1]}) \in U^{(i+1)}$. Having proved the inductive step, we obtain $|\tilde{A} \setminus \tilde{V}| \geq {c''}^{\,k+1}|G_{[k+1]}|$, which is a contradiction with the choice of $A$.\end{proof}

Take $\varepsilon =  {c''}^{\,k + 1} /2$. Since $\tilde{A}$ has codimension $\log_p \varepsilon^{-1} + (k+1)^2 r \leq O(\log_{p} c^{-1} + 1)$, we are done.\end{proof}

\thebibliography{99}
\bibitem{AdipKazhZie} K. Adiprasito, D. Kazhdan and T. Ziegler, \textit{On the Schmidt and analytic ranks for trilinear forms}, arXiv preprint (2021), \verb+arXiv: 2102.03659+.

\bibitem{BhowLov} A. Bhowmick and S. Lovett, \emph{Bias vs structure of polynomials in large fields, and applications in effective algebraic geometry and coding theory}, IEEE Trans. Inform. Theory \textbf{69} (2022), 963--977. 

\bibitem{ChenYe} Q. Chen and K. Ye, \textit{Stability of ranks under field extensions}, arXiv preprint (2024), \verb+arXiv:2409.04034+.
.

\bibitem{GowWolf} W.T. Gowers and J. Wolf, \textit{Linear forms and higher-degree uniformity for functions on $\mathbb{F}_p^n$}, Geom. Funct. Anal. \textbf{21} (2021), 36--69.

\bibitem{GreenTaoPolys} B. Green and T. Tao. \emph{The distribution of polynomials over finite fields, with applications to the Gowers norms}, Contrib. Discrete Math. \textbf{4} (2009), no. 2, 1--36.

\bibitem{Janzer1} O. Janzer, \textit{Low analytic rank implies low partition rank for tensors}, arXiv preprint (2018), \verb+arXiv:1809.10931+.

\bibitem{Janzer2} O. Janzer, \emph{Polynomial bound for the partition rank vs the analytic rank of tensors}, Discrete Anal. (2020), paper no. 7, 18 pp.

\bibitem{KaufmanLovett} T. Kaufman and S. Lovett, \textit{Worst Case to Average Case Reductions for Polynomials}, Proceedings of the 2008 49th Annual IEEE Symposium on Foundations of Computer Science (FOCS 2008), (2008), 166--175.

\bibitem{KazhZie} D. Kazhdan and T. Ziegler, \textit{Applications of algebraic combinatorics to algebraic geometry}, Indag. Math. new ser. \textbf{32} (2021), 1412--1428.

\bibitem{LamZie} A. Lampert and T. Ziegler, \textit{Relative rank and regularization}, Forum Math. Sigma \textbf{12} (2024), e29.

\bibitem{LovettArank} S. Lovett, \textit{The analytic rank of tensors and its applications}, Discrete Anal. (2019), paper no. 7, 10 pp.

\bibitem{LukaRank} L. Mili\'cevi\'c, \emph{Polynomial bound for partition rank in terms of analytic rank}, Geom. Funct. Anal. \textbf{29} (2019), 1503--1530.

\bibitem{MoshZhu} G. Moshkovitz and D. G. Zhu, \textit{Quasi-linear relation between partition and analytic rank}, arXiv preprint (2022), \verb+arXiv:2211.05780+.

\bibitem{Naslund} E. Naslund, \textit{The partition rank of a tensor and $k$-right corners in $\mathbb{F}_q^n$}, J. Combin. Theory Ser. A \textbf{174} (2020), 105190.

\end{document}